\newtheorem{theorem}{Theorem}
\theoremstyle{plain}
\newtheorem{lemma}[theorem]{Lemma}
\newtheorem{prop}[theorem]{Proposition}
\theoremstyle{definition}
\newtheorem{definition}[theorem]{Definition}
\theoremstyle{remark}
\newcommand{\Z}{\mathbb{Z}}
\newcommand{\ds}{\displaystyle}
\begin{document}

\title{On the Homology of Elementary Abelian Groups as Modules over
  the Steenrod Algebra}

\author{Shaun V. Ault}

\author{William Singer}

\address{Department of Mathematics, Fordham University, Bronx, NY
  10458, U.S.A}

\begin{abstract}
  We examine the dual of the so-called "hit problem", the latter being
  the problem of determining a minimal generating set for the
  cohomology of products of infinite projective spaces as module over
  the Steenrod Algebra $\mathcal{A}$ at the prime 2.  The dual problem
  is to determine the set of $\mathcal {A}$-annihilated elements in
  homology.  The set of $\mathcal{A}$-annihilateds has been shown by
  David Anick to be a free associative algebra.  In this note we prove
  that, for each $k \geq 0$, the set of {\it $k$ partially
    $\mathcal{A}$-annihilateds}, the set of elements that are
  annihilated by $Sq^i$ for each $i\leq 2^k$, itself forms a free
  associative algebra. \end{abstract}

\maketitle

\section{Introduction and Notations}\label{sec.intro}

Let $\mathbb{F}_2$ be the field of $2$ elements and $\Gamma = \{
\Gamma_{s, *} \}_{s \geq 0}$ be the bigraded $\mathbb{F}_2$-space
defined by
\[
  \Gamma_{s, *} = H_{*}(B(\Z / 2)^{\times s}, \mathbb{F}_2), \quad
  \textrm{ for each $s \geq 0$.}
\]
The bigrading $(s, d)$ is by number of direct product factors of $B(\Z
/ 2)$ and homological degree.  We shall say that an element $x$ of
bidegree $(s, d)$ has {\it length} $s$ and {\it degree} $d$.  This
paper studies $\Gamma$ with its canonical structure as a right module
over the Steenrod algebra $\mathcal{A}$.  We are interested in
particular in the problem of determining the graded vector spaces
\[
  \Gamma_{s,*}^{\mathcal{A}} = \{ a \in \Gamma_{s,*} \;|\; (a)Sq^k =
  0, \forall k > 0\}
\]
consisting of all elements of $\Gamma$ that are annihilated by the
Steenrod operations of positive degree.  This problem and its dual
(finding a minimal generating set for the cohomology of $B(\Z /
2)^{\times s}$ as a left $\mathcal{A}$-module) have been much studied
in recent years.  The reader can find comprehensive bibliographies
covering work done through 2000 in \cite{Wood98} and \cite{Wood00},
and will find recent work in \cite{Kameko03}, \cite{Nam04} and
\cite{Sum07}.  Much progress has been made, but the general problem
remains unsolved.

We will work in terms of the reduced homology groups of the smash
products:
\[
  \widetilde{\Gamma}_{s, *} = \widetilde{H}_{*}(B(\Z / 2)^{\wedge s},
  \mathbb{F}_2), \quad \textrm{ for each $s \geq 1$.}
\]
We adopt also the convention $\widetilde{\Gamma}_{0, *} = H_*(\ast,
\mathbb{F}_2)$, the homology of a point.  We assemble the spaces
$\widetilde{\Gamma}_{s, *}$ into a bigraded vector space
\[
  \widetilde{\Gamma} = \{\widetilde{\Gamma}_{s, *} \}_{s \geq 0}
\]
and write the associated vector spaces of $\mathcal{A}$-annihilated
elements $\widetilde{\Gamma}_{s, *}^{\mathcal{A}} \subseteq
\widetilde{\Gamma}_{s, *}$.  The vector spaces $\Gamma_{s,
  *}^{\mathcal{A}}$ are easily expressed in terms of the spaces
$\widetilde{\Gamma}_{p, *}^{\mathcal{A}}$ for $p \leq s$, so a study
of the smash products is sufficient.  The natural mappings:
\[
  B(\Z / 2)^{\wedge p} \times B(\Z / 2)^{\wedge q} \longrightarrow
  B(\Z / 2)^{\wedge (p+q)}
\]
induce pairings of vector spaces:
\[
  \widetilde{\Gamma}_{p, *} \otimes \widetilde{\Gamma}_{q, *}
  \longrightarrow \widetilde{\Gamma}_{p+q, *}
\]
for all $p, q \geq 0$, which make $\widetilde{\Gamma}$ into a connected
bigraded algebra.  By the K\"{u}nneth theorem, $\widetilde{\Gamma}$ is
a free associative $\mathbb{F}_2$-algebra.  For each $k \geq 1$, it is
convenient to represent the canonical generators,
\[
  \gamma_k \in \widetilde{\Gamma}_{1,k} = \widetilde{H}_k(B(\Z/2)).
\]
Then we have $\widetilde{\Gamma} = \mathbb{F}_2 \langle \{ \gamma_1,
\gamma_2, \gamma_3, \ldots \} \rangle$ (we use the notation of
Cohn~\cite{C}: for a field $\Bbbk$ and set $X$, $\Bbbk\langle X
\rangle$ is the free associative $\Bbbk$-algebra generated by $X$).
The Cartan formula implies that the bigraded vector space
$\widetilde{\Gamma}^{\mathcal{A}} = \{\widetilde{\Gamma}_{s,
  *}^{\mathcal{A}}\}_{s\geq 0}$ is a subalgebra of
$\widetilde{\Gamma}$.  Anick proves in~\cite{A1} that this subalgebra
is itself free.  Now for each $k \geq 0$, and $s, d \geq 0$, define:
\[
  \Delta(k)_{s,d} = \bigcap_{i = 0}^k \mathrm{ker} (Sq^{2^i} :
  \widetilde{\Gamma}_{s,d} \to \widetilde{\Gamma}_{s, d-2^i}),
\]
and set $\Delta(k) = \{ \Delta(k)_{s,d} \}_{s, d \geq 0}$, a bigraded
space called the ``$k$ partially $\mathcal{A}$-annihilateds.''  Using
a variant of Anick's argument, we will show that

\begin{theorem}
  For each $k \geq 0$, $\Delta(k)$ is a free subalgebra of
  $\widetilde{\Gamma}$.
\end{theorem}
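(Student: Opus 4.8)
The plan is to reduce freeness of $\Delta(k)$ to a statement of module freeness over $\Delta(k)$, and then to prove that statement by a combinatorial construction modelled on Anick's. Two facts are needed at the outset. First, $\Delta(k)$ really is a subalgebra: since every $Sq^{j}$ with $j$ not a power of $2$ is decomposable in $\mathcal{A}$, induction on $j$ shows that $\Delta(k)=\{a\in\widetilde{\Gamma}\mid (a)Sq^{j}=0\text{ for all }0<j\le 2^{k}\}$, and then in the Cartan formula $(ab)Sq^{n}=\sum_{i+j=n}(a)Sq^{i}\,(b)Sq^{j}$ with $0<n\le 2^{k}$ every summand is $0$ — if $i>0$ then $i\le 2^{k}$ kills $(a)Sq^{i}$, and if $i=0$ then $0<j=n\le 2^{k}$ kills $(b)Sq^{j}$ — so $\Delta(k)$ is closed under multiplication.

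Second, a homological reduction. A free associative algebra over a field has global dimension at most one, so $\widetilde{\Gamma}$ is hereditary. Hence if $B\subseteq\widetilde{\Gamma}$ is a connected graded subalgebra over which $\widetilde{\Gamma}$ is free (equivalently, since everything in sight is connected graded, flat) as a \emph{left} module, then the base-change isomorphism $\operatorname{Tor}^{B}_{*}(\mathbb{F}_{2},\mathbb{F}_{2})\cong\operatorname{Tor}^{\widetilde{\Gamma}}_{*}(\mathbb{F}_{2}\otimes_{B}\widetilde{\Gamma},\mathbb{F}_{2})$ shows $\operatorname{Tor}^{B}_{i}(\mathbb{F}_{2},\mathbb{F}_{2})=0$ for $i\ge 2$, which for a connected graded algebra is exactly the condition of being free. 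The same reduction applies with $\widetilde{\Gamma}$ replaced by any free subalgebra of it. So it suffices to show that, for each $i\ge 0$, $\Delta(i-1)$ is free as a left $\Delta(i)$-module, where we set $\Delta(-1)=\widetilde{\Gamma}$; the theorem then follows by induction on $k$, each $\Delta(i)$ being a free algebra once $\Delta(i-1)$ is.

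To organize the inductive step I would use the following observation. For $i\ge 1$ and $a,b\in\Delta(i-1)$ the Cartan expansion of $(ab)Sq^{2^{i}}$ has no cross terms: a summand $(a)Sq^{p}\,(b)Sq^{q}$ with $p+q=2^{i}$ and $p,q>0$ vanishes unless $p>2^{i-1}$ and $q>2^{i-1}$, which is impossible. (For $i=0$ this is just the Leibniz rule for the Bockstein $Sq^{1}$.) Thus $Sq^{2^{i}}$ restricts to a derivation $\Delta(i-1)\to\widetilde{\Gamma}$ with kernel $\Delta(i)$, giving a short exact sequence of left $\Delta(i)$-modules
\[
  0\longrightarrow\Delta(i)\longrightarrow\Delta(i-1)\xrightarrow{\ Sq^{2^{i}}\ }N_{i}\longrightarrow 0,\qquad N_{i}:=(\Delta(i-1))Sq^{2^{i}}\subseteq\widetilde{\Gamma}.
\]
If $N_{i}$ is a free left $\Delta(i)$-module, this sequence splits and $\Delta(i-1)\cong\Delta(i)\oplus N_{i}$ is free over $\Delta(i)$ (using that $\Delta(i)$ is a free module of rank one over itself), which is what the reduction above requires. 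So the whole theorem comes down to showing that $N_{i}$ — equivalently the quotient $\Delta(i-1)/\Delta(i)$ — is a free left $\Delta(i)$-module.

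This last point is the crux, and is where Anick's argument has to be adapted. The approach I would take is to fix an admissible monomial order on the words in $\gamma_{1},\gamma_{2},\dots$ and construct a Schreier-type transversal: a set $W$ of monomials for which multiplication induces an isomorphism $\Delta(i)\otimes\mathbb{F}_{2}\{W\}\xrightarrow{\ \sim\ }\Delta(i-1)$ — which exhibits $\Delta(i-1)$ as free over $\Delta(i)$ and a fortiori yields the freeness of $N_{i}$ demanded above. Building $W$, and checking that this map is bijective, hinges on determining which monomials can occur as leading terms of elements of $\Delta(i)$; through the formula $(\gamma_{n})Sq^{j}=\binom{n-j}{j}\gamma_{n-j}$, the Cartan formula, and Lucas' theorem on binomial coefficients modulo $2$, this becomes a bookkeeping problem about binary digit patterns — of the same species Anick treats for $\widetilde{\Gamma}^{\mathcal{A}}$, except that now one must track the finitely many operations $Sq^{1},\dots,Sq^{2^{k}}$ simultaneously along an arbitrarily long product of generators instead of all of $\mathcal{A}$ at once. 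I expect this combinatorial analysis, together with the verification that the transversal genuinely yields a direct-sum decomposition, to be the main obstacle; the homological and derivation-theoretic scaffolding around it is routine.
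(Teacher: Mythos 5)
Your scaffolding is sound as far as it goes: the subalgebra check via decomposability of $Sq^{j}$ for $j$ not a power of $2$ is correct, the observation that $Sq^{2^{i}}$ restricts to a derivation on $\Delta(i-1)$ is correct, and the change-of-rings argument (hereditariness of a free algebra plus $\operatorname{Tor}^{B}_{\geq 2}(\mathbb{F}_2,\mathbb{F}_2)=0$ implying freeness of a connected graded $B$) is a legitimate reduction. But the proposal is not a proof, because everything has been funneled into the single unproven claim that $\Delta(i-1)$ is a free left $\Delta(i)$-module (equivalently, that your transversal $W$ exists and induces an isomorphism $\Delta(i)\otimes\mathbb{F}_2\{W\}\to\Delta(i-1)$), and you explicitly defer that claim. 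This is not a routine verification left to the reader: module-freeness of one subalgebra of $\widetilde{\Gamma}$ over another is a genuinely stronger statement than freeness of $\Delta(i)$ as an algebra, it is not automatic for nested free connected graded subalgebras, and the leading-term bookkeeping you sketch (tracking $Sq^{1},\dots,Sq^{2^{k}}$ along arbitrary words via Lucas' theorem) is exactly the kind of analysis that has resisted solution in the hit-problem literature. As it stands, the entire mathematical content of the theorem sits inside the step you label ``the main obstacle.''

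The paper's proof avoids all of this. It uses Anick's criterion: a connected $\Z^{t}$-graded algebra is free if and only if every relation $\sum_{i}a_{i}b_{i}=0$ with all $b_{i}\neq 0$ forces some $a_{j}\in\sum_{i\neq j}a_{i}A$. Given such a relation in $\Delta(k)$, one chooses a relation among the $a_{i}$ with a \emph{minimal number of summands} inside the ambient free algebra $\widetilde{\Gamma}$, applies the criterion there to write $a_{j}=\sum_{i\neq j}a_{i}d_{i}$ with $d_{i}\in\widetilde{\Gamma}$, and then hits this identity with $Sq^{2^{\ell}}$ for each $0\leq\ell\leq k$: the Cartan formula kills all terms involving $(a_{i})Sq^{p}$ with $p>0$, leaving $0=\sum_{i\neq j}a_{i}\bigl(d_{i}Sq^{2^{\ell}}\bigr)$, and minimality of the number of summands forces every $d_{i}Sq^{2^{\ell}}=0$, i.e.\ $d_{i}\in\Delta(k)$. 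No transversals, no leading-term combinatorics, no module freeness. If you want to salvage your route, you would need to actually construct $W$ and prove bijectivity, which I expect to be substantially harder than adopting the minimality-plus-$Sq^{2^{\ell}}$ trick above.
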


Note that if $k$ is chosen so that $d < 2^{k+2}$, then we have for any
$s \geq 0$:
\[
  \widetilde{\Gamma}^{\mathcal{A}}_{s,d} = \Delta(k)_{s,d}.
\]
Thus, determining the sets $S_k$ such that $\Delta(k) =
\mathbb{F}_2\langle S_k\rangle$ would solve the
``$\mathcal{A}$-annihilated problem'', and the solution would be in
terms of explicitly-given algebra generators.  Furthermore, partial
progress is meaningful, as the determination of the set $S_k$ would
determine all $\mathcal{A}$-annihilateds of degree $d < 2^{k+2}$.

In this note, $\mathrm{ker}Sq^{p}$ and $\mathrm{im}Sq^p$ will be
understood to involve the restricted maps $Sq^p : \widetilde{\Gamma}
\to \widetilde{\Gamma}$.

\section{Proof of the Main Theorem}

In this section we prove that $\Delta(k)$ is a tensor algebra, using a
remarkable lemma of Anick~\cite{A1}, here stated for the case of
$\Z^{t}$-graded algebras. Let $\Bbbk$ be a field.  For $t \geq 1$, a
$\Bbbk$-algebra $A$ is {\it $\Z^{t}$-graded} if $A = \{ A_{I} \}_{I
  \in \Z^{t}}$, and multiplication in $A$ is a family of maps,
\[
  A_{I} \otimes A_{J} \to A_{I+J}.
\]
If $x \in A_{I}$, we say the degree of $x$ is $I = (i_1, i_2, \ldots,
i_t)$.  Introduce a lexicographic ordering of degree as follows: $I <
J$ if and only if there is an integer $r$ with $1 \leq r \leq t$ such
that $i_p = j_p$ if $p < r$, and $i_{r} < j_{r}$.  The algebra $A$ is
said to be connected if $A_I = 0$ whenever $I$ contains a negative
entry, and $A_{(0,\ldots, 0)} \cong \Bbbk$.  The {\it
  positively-graded} elements of $A$ are the elements of the set
\[
  A^+ = \bigcup \{A_{I} \;|\; \textrm{all entries of $I$ are non-negative
    and at least one entry is positive}\}.
\]

\begin{definition}\label{def.anick}
  Let $A$ be a connected $\Z^t$-graded algebra ($t \geq 1$) over a
  field $\Bbbk$.  $A$ is said to satisfy {\it Anick's Condition} if
  whenever a relation,
  \begin{equation}\label{eqn.relation}
    \sum_{i=1}^n a_ib_i = 0,
  \end{equation}
  holds in $A$, where each $b_i \neq 0$, then there is a $j$ such that
  \begin{equation}\label{eqn.ideals}
    a_j \in \sum_{i \neq j} a_iA.
  \end{equation}
\end{definition}

\begin{lemma}[Anick~\cite{A1}]\label{lem.anick}
  Let $A$ be a connected $\Z^t$-graded algebra over a field $\Bbbk$.  Then
  $A$ is a tensor algebra, $A = \Bbbk\langle X \rangle$, for some set
  of positively-graded elements $X \subset A^+$, if and only if $A$
  satisfies Anick's Condition.
\end{lemma}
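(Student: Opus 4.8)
The plan is to prove the two implications separately. The substantive half for this paper is the ``if'' direction (Anick's Condition implies freeness), which I treat first and in full; the ``only if'' direction rests on the standard structure theory of free algebras.

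For the ``if'' direction, suppose $A$ satisfies Anick's Condition. Let $Q(A) = A^+/(A^+)^2$ be the $\Z^t$-graded space of indecomposables, and choose a set $X \subset A^+$ of homogeneous elements whose images form a $\Bbbk$-basis of $Q(A)$. A graded Nakayama argument (induct on total degree, using $A_{(0,\dots,0)} \cong \Bbbk$) shows that $X$ generates $A$ as an algebra, so there is a surjective graded algebra map $\phi \colon \Bbbk\langle X\rangle \to A$ sending each generator to itself; it remains to prove $\phi$ injective. Suppose not, and pick $r \in \ker\phi$ nonzero, $\Z^t$-homogeneous, of minimal total degree. Grouping the words of $r$ by their first letter gives a unique expression $r = \sum_{x \in X} x\, r_x$ with each $r_x$ of strictly smaller total degree, so minimality forces $r_x = 0$ whenever $\phi(r_x) = 0$. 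Restricting to $S = \{x : r_x \neq 0\}$ and writing $\bar r_x = \phi(r_x)$, the relation $\sum_{x \in S} x\, \bar r_x = 0$ holds in $A$ with every $\bar r_x \neq 0$. Anick's Condition then yields $x_0 \in S$ with $x_0 \in \sum_{x \neq x_0} x A$; taking homogeneous components and using connectedness, the coefficients pairing $x_0$ with generators $x$ of equal degree are scalars while all others land in $(A^+)^2$. Projecting to $Q(A)$ gives $[x_0] = \sum \lambda_x [x]$ over generators $x \neq x_0$ of the same degree, contradicting the linear independence of the basis $\{[x]\}$. Hence $\ker\phi = 0$ and $A = \Bbbk\langle X\rangle$ is free.

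For the ``only if'' direction, let $A = \Bbbk\langle X\rangle$ and take a relation as in \eqref{eqn.relation} with every $b_i \neq 0$. If some $a_i = 0$, then \eqref{eqn.ideals} holds trivially with $j = i$, so assume all $a_i \neq 0$. Fix a multiplicative well-ordering $\prec$ of the words in $X$ (order by total degree, then lexicographically); since each generator has positive degree, a prefix of prescribed total degree is unique, and one checks $\mathrm{lt}(uv) = \mathrm{lt}(u)\,\mathrm{lt}(v)$ and $\mathrm{lc}(uv) = \mathrm{lc}(u)\,\mathrm{lc}(v)$ for all $u,v \neq 0$, so $A$ is a domain. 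Let $M = \max_i \mathrm{lt}(a_i)\mathrm{lt}(b_i)$ and $T = \{i : \mathrm{lt}(a_i)\mathrm{lt}(b_i) = M\}$; cancellation of the coefficient of $M$ gives $\sum_{i \in T}\mathrm{lc}(a_i)\mathrm{lc}(b_i) = 0$, so $|T| \geq 2$. The words $\mathrm{lt}(a_i)$ for $i \in T$ are prefixes of $M$, hence totally comparable; let $j \in T$ have $\mathrm{lt}(a_j)$ shortest, so $\mathrm{lt}(a_i) = \mathrm{lt}(a_j)\,q_i$ and $q_i\,\mathrm{lt}(b_i) = \mathrm{lt}(b_j)$. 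Replacing each $a_i$ (for $i \in T \setminus \{j\}$) by $a_i - \nu_i a_j q_i$ with $\nu_i = \mathrm{lc}(a_i)/\mathrm{lc}(a_j)$ cancels its leading term and collects a new factor $\hat b_j = b_j + \sum_{i \in T \setminus \{j\}} \nu_i q_i b_i$ at the index $j$. The coefficient of the word $\mathrm{lt}(b_j)$ in $\hat b_j$ equals $\mathrm{lc}(a_j)^{-1}\sum_{i \in T}\mathrm{lc}(a_i)\mathrm{lc}(b_i) = 0$, so $\mathrm{lt}(\hat b_j) \prec \mathrm{lt}(b_j)$ and every term of the new relation has leading word $\prec M$. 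Thus induction on the $\prec$-position of $M$ applies to the new relation (with one fewer term if $\hat b_j = 0$), and a bookkeeping argument transfers the resulting membership \eqref{eqn.ideals} back to the original $a_i$.

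The main obstacle is precisely this last transfer. When the inductive hypothesis returns the pivot index $j$ itself, one obtains $a_j(1 + w) = \sum_{i \neq j} a_i c_i$ for some $w \in A$, and in a connected graded algebra $1 + w$ need not be invertible, so membership of $a_j$ in $\sum_{i \neq j} a_i A$ is not immediate. Controlling this degenerate case---equivalently, establishing Cohn's weak algorithm for $\Bbbk\langle X\rangle$ and the consequent trivializability of relations (\cite{C})---is the delicate point, and it is what makes the ``only if'' direction genuinely a theorem about free algebras rather than a formal manipulation. By contrast, the ``if'' direction above is self-contained, and it is the implication invoked in the sequel to prove that $\Delta(k)$ is free.
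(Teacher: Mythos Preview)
Your ``if'' direction (Anick's Condition implies free) is correct and is essentially the paper's argument, recast in the language of the indecomposables $Q(A) = A^+/(A^+)^2$: both choose a minimal homogeneous generating set, take a relation of least degree, and derive a contradiction with minimality from Anick's Condition.

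Your ``only if'' direction, however, follows a different route from the paper and---as you yourself acknowledge---has a genuine gap at the inductive transfer step. The paper avoids this entirely by using a single \emph{left transduction} rather than a leading-term rewriting induction. After ordering the summands so that $\deg(b_1) \geq \cdots \geq \deg(b_n)$, one chooses a monomial $\mu$ of degree $I = \deg(b_n)$ occurring in $b_n$ and defines the $\Bbbk$-linear map $a \mapsto a^*$ via the unique decomposition $a = a_0 + a^*\mu$ in which no term of $a_0$ is right-divisible by $\mu$. A short check gives $(ab)^* = ab^*$ whenever every term of $b$ has degree at least $I$; applying this to the relation yields $\sum_i a_i b_i^* = 0$. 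Since $b_n$ has degree exactly $I$, the cofactor $b_n^*$ is a nonzero scalar, and one solves directly: $a_n = \sum_{i < n} a_i\bigl(-b_i^*(b_n^*)^{-1}\bigr)$. There is no induction and no factor $(1+w)$ to invert; this is precisely the simplification over Cohn's general weak-algorithm machinery that the paper advertises.

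One further correction: your closing sentence asserts that only the ``if'' direction is invoked in the sequel. In fact the proof of Theorem~\ref{thm.free} uses \emph{both}: the ``only if'' direction is applied to the free algebra $\widetilde\Gamma$ to produce the expression $a_j = \sum_{i\neq j} a_i d_i$ with $d_i \in \widetilde\Gamma$, and only then---after showing each $d_i \in \Delta(k)$---is the ``if'' direction invoked to conclude that $\Delta(k)$ is free. Lemma~\ref{lem.any} likewise rests on the forward implication. So a complete argument for that direction is not optional here, and the transduction proof is exactly what supplies it.
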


Anick's proof makes use of the work of Cohn~\cite{C} on so-called free
ideal rings (firs).  For completeness, we shall provide a proof that
avoids as much of this machinery as possible.  Furthermore, our
working in the graded case allows us to simplify some of Cohn's
arguments.

\begin{proof}
  The backward direction is the easier of the two.  The proof is
  Anick's~\cite{A1}.  Suppose that $A$ is not a tensor algebra.  Choose
  a minimal set $X$ of generators for $A$, and write $A = \Bbbk\langle
  X \rangle / R$ where $R$ is the non-zero ideal of relations.  Choose
  a non-zero $\alpha \in R$ of minimal degree.  Then $\alpha$ can be
  expanded uniquely in the form:
  \[
    \alpha = \sum_{i=1}^{m} x_iY_i,
  \]
  where the $x_i$ are distinct elements of the generating set $X$, and
  $Y_i \neq 0$ for each $i$.  For each $x \in \Bbbk\langle X \rangle$,
  write $\overline{x}$ for the corresponding element of $A$.  Since
  $\alpha \in R$, we have:
  \begin{equation}\label{eqn.rel}
    \sum_{i=1}^m \overline{x_i}\overline{Y_i} = 0.
  \end{equation}

  Since $\alpha$ is of minimal degree in $R$, we have
  $\overline{Y_i} \neq 0$ in $A$, for each value of $i$.  So
  Eqn.~(\ref{eqn.rel}) is a relation of the form (\ref{eqn.relation}).
  But if there were a $j$ with $1 \leq j \leq m$ such that
  \begin{equation}\label{eqn.overline_sum}
    \overline{x_j} \in \sum_{i \neq j} \overline{x_i}A,
  \end{equation}
  then the generating set $X$ would not be minimal, contradicting our
  assumption.  Thus $A$ cannot satisfy Anick's Condition.

  For the forward direction, assume that $A$ is a connected graded
  tensor algebra $\Bbbk\langle X \rangle$ on a generating set $X
  \subset A^+$.  Suppose now that there is a relation,
  \begin{equation}\label{eqn.sum}
    \sum_{i=1}^n a_ib_i = 0,
  \end{equation}
  for $a_i, b_i \in A$, and each $b_i \neq 0$, as in the premise of
  Anick's Condition.  We may assume the summands are ordered so that
  $deg(b_1) \geq deg(b_2) \geq \cdots \geq deg(b_n)$.  Let $I =
  deg(b_n)$, and let $c\mu = cx_1\cdots x_s$ be a term of degree $I$
  occurring in $b_n$ ($c \in \Bbbk, x_i \in X$).  For any element $a
  \in A$, we may write $a = a_0 + a^*\mu$ for some $a_0, a^* \in A$
  such that $\mu$ does not right-divide any term of $a_0$.  Moreover,
  both $a_0$ and $a^*$ are uniquely-determined since $A$ is free.
  Observe, the function $a \mapsto a^*$ is $\Bbbk$-linear of degree
  $-deg(\mu) = -I$ (This mapping is known as left transduction for
  $\mu$, and $a^*$ is known as the left cofactor of $\mu$ in
  $a$. See~\cite{C2}).

  Suppose $b \in A$ is any single term.  Then either $\mu$ does not
  right-divide $b$, in which case $b^* = 0$, or $\mu$ does, and
  $b^*\mu = b$.  Thus, if $deg(b) \geq I$, then for any $a \in A$,
  $(ab)^* = ab^*$.  
  By linearity of transduction, we have:
  \[
    (ab)^* = ab^*, \qquad \textrm{for any $a, b \in A$ such that
    $deg(b) \geq I$}.
  \]
  Applying transduction for $\mu$ to Eqn.(\ref{eqn.sum}), we have,
  since each $b_i$ has degree at least $I$, 
  \[
    0 = \left( \sum_{i=1}^n a_ib_i \right)^* = \sum_{i=1}^n a_ib_i^*
  \]
  Finally, since $b_n^* \neq 0$ has degree $(0, \ldots, 0)$, and $A$
  is connected, we have in fact shown that $b_n^* \in
  \Bbbk^{\times}$. We obtain a relation of the form:
  \[
    a_n = \left(-\sum_{i=1}^{n-1} a_i b_i^*\right)(b_n^*)^{-1}
    = \sum_{i=1}^{n-1} a_i(-b_n^*)^{-1}b_i^*
  \]
  Therefore, $\ds{ a_n \in \sum_{i \neq n} a_iA }$, as desired.
\end{proof}

The following is a useful application of Lemma \ref{lem.anick}.
\begin{lemma}\label{lem.any}
 Let $A$ be a connected $\Z^t$-graded algebra over a field $\Bbbk$,
 and suppose that $A$ is a tensor algebra, $A = \Bbbk\langle X
 \rangle$, on some set of positively-graded elements $X \subset A^+$.
 Let $S$ be {\it any} set of positively graded elements that form a
 minimal generating set for $A$.  Then $A$ is the tensor algebra on
 $S$.
\end{lemma}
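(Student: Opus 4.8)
The plan is to derive the result entirely from Lemma~\ref{lem.anick}. Since $A=\Bbbk\langle X\rangle$ is a connected $\Z^{t}$-graded tensor algebra, that lemma tells us $A$ satisfies Anick's Condition, and this is the only property of $A$ we shall use. Form the abstract tensor algebra $F=\Bbbk\langle S\rangle$ on a set of symbols indexed by $S$, graded by assigning to each symbol the $\Z^{t}$-degree of the corresponding element of $S$ (recall that elements of $A^{+}$, hence of $S$, are homogeneous). Because $S$ generates $A$, there is a surjective graded $\Bbbk$-algebra homomorphism $\pi\colon F\to A$ sending each symbol to the corresponding element of $S$, and its kernel $R$ is a homogeneous two-sided ideal. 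The statement that $A$ is the tensor algebra on $S$ is exactly the statement that $R=0$.

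To prove $R=0$ I would argue by contradiction, imitating the backward direction of the proof of Lemma~\ref{lem.anick}. If $R\neq 0$ then, since $R$ is homogeneous and the lexicographic order well-orders $\Z^{t}_{\geq 0}$ (the fact used tacitly above when one chooses a relation of minimal degree), we may pick a nonzero homogeneous $\alpha\in R$ of least degree $I$. As $F$ and $A$ agree with $\Bbbk$ in degree $(0,\dots,0)$ via $\pi$, we have $I>0$; hence $\alpha\in F^{+}$ and has a unique expansion $\alpha=\sum_{i=1}^{m}\xi_iY_i$ with the $\xi_i$ distinct generating symbols and each $Y_i\neq 0$ (group the monomials of $\alpha$ by their leftmost letter). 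Applying $\pi$ gives $\sum_{i=1}^{m}s_i\,\pi(Y_i)=0$ in $A$, where $s_i\in S$ is the image of $\xi_i$. Each $Y_i$ is homogeneous of degree $I-deg(s_i)<I$ (using that $deg(s_i)$ is positive), so minimality of $I$ forces $\pi(Y_i)\neq 0$. Thus we have a relation in $A$ of the form occurring in Anick's Condition, and there is a $j$ with $s_j\in\sum_{i\neq j}s_iA$.

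It remains to see this contradicts minimality of $S$, and this is the only step requiring genuine (if routine) work, so I expect it to be the main obstacle. The background fact, proved by the usual induction on degree, is that for a connected $\Z^{t}$-graded algebra a set of positively-graded elements is a minimal generating set exactly when its image in the indecomposables $Q(A)=A^{+}/(A^{+}\!\cdot A^{+})$ is a basis; in particular the distinct elements $s_1,\dots,s_m\in S$ have linearly independent images $\bar s_1,\dots,\bar s_m$ in $Q(A)$. Write $s_j=\sum_{i\neq j}s_ia_i$ with each $a_i$ taken homogeneous of degree $deg(s_j)-deg(s_i)$, and pass to $Q(A)$: every term with $deg(a_i)$ positive lies in $A^{+}\!\cdot A^{+}$ and vanishes, while every term with $deg(a_i)=(0,\dots,0)$ has $a_i\in\Bbbk$ and contributes a scalar multiple of $\bar s_i$. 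Hence $\bar s_j$ is a linear combination of the $\bar s_i$ with $i\neq j$, contradicting their linear independence. Therefore $R=0$, $\pi$ is an isomorphism, and $A=\Bbbk\langle S\rangle$, as claimed.
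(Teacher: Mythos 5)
Your proposal is correct and follows essentially the same route as the paper: take a minimal-degree element of the kernel of $\Bbbk\langle S\rangle\to A$, expand it by leftmost generator, invoke Anick's Condition (via Lemma~\ref{lem.anick}) to get $s_j\in\sum_{i\neq j}s_iA$, and contradict minimality of $S$. Your final step, passing to the indecomposables $Q(A)=A^{+}/(A^{+}\cdot A^{+})$, is a slightly more careful rendering of the paper's degree argument that the coefficients $c_i$ cannot involve $\overline{s_j}$, but it is the same idea.
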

\begin{proof}
Consider the canonical algebra mapping $\Bbbk\langle S \rangle\to A$.
We must show that the kernel is zero.  Suppose to the contrary there
is a non-zero element of the kernel.  We choose one of least degree;
say $\sum_{i=1}^n s_iY_i$, where the elements $s_i$ are distinct
members of the set $S$, and each $Y_i$ is a non-zero element of
$\Bbbk\langle S \rangle$.  Then we have in $A$ the relation:
\begin{equation}\label{relation}
\sum_{i = 1}^n \overline{s_i}\overline{Y_i} = 0.
\end{equation}
 Our assumption that \eqref{relation} is a relation of least degree
 assures that each $\overline{Y_i}$ is a non-zero element of $A$ .
 But by Lemma \ref{lem.anick}, A satisfies Anick's condition.  So
 there must be an index $j$ with $1\leq j\leq n$ and elements $c_i\in
 A$ for $i \neq j$ such that:
\begin{equation}\label{dependence}
\overline{s_j} = \sum_{i\neq j}\overline{ s_i}c_i.
\end{equation}
Now for each index $i\neq j$, the element $c_i$ must be expressible as
a non-commutative polynomial in the elements of $S$.  Further, since
$A$ is graded-connected, and each $\overline{s_i}$ has positive
degree, none of these polynomials can involve the element
$\overline{s_j}$.  Hence, equation \eqref{dependence} expresses
$\overline{s_j}$ in terms of the other generators.  This dependence
would contradict the assumed minimality of the generating set $S$.
Thus there can be no relation of the form \eqref{relation}, and the
result is proved.
\end{proof}

Now we come to our main result.

\begin{theorem}\label{thm.free}
  For $k \geq 0$, $\Delta(k)$ is free as associative
  $\mathbb{F}_2$-algebra.
\end{theorem}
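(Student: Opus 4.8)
The plan is to verify that $\Delta(k)$ satisfies Anick's Condition and then invoke Lemma~\ref{lem.anick}. First I would record that $\Delta(k)$ inherits from $\widetilde{\Gamma}$ the structure of a connected $\Z^2$-graded $\mathbb{F}_2$-algebra, bigraded by length and degree: each $Sq^{2^i}$ preserves length and strictly lowers degree, so $\Delta(k)$ is a bigraded subspace of $\widetilde{\Gamma}$; its degree-$(0,0)$ part is $\widetilde{\Gamma}_{0,0}=\mathbb{F}_2$ and it vanishes whenever a grading index is negative; and it is closed under multiplication. For the last point one uses that an element of $\Delta(k)$ is in fact annihilated by every $Sq^u$ with $1\le u\le 2^k$ (the description of $\Delta(k)$ recalled in Section~\ref{sec.intro}; by the Adem relations each such $Sq^u$ lies in the right ideal of $\mathcal{A}$ generated by $Sq^1,\dots,Sq^{2^k}$), so that in the Cartan expansion $Sq^{2^i}(xy)=\sum_{u+v=2^i}(x)Sq^u\cdot(y)Sq^v$ every term vanishes when $x,y\in\Delta(k)$. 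By Lemma~\ref{lem.anick} it then suffices to check Anick's Condition for $\Delta(k)$.

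The heart of the argument is the following promotion claim: \emph{if $a,a_1,\dots,a_m\in\Delta(k)$ and $a\in\sum_{l=1}^m a_l\widetilde{\Gamma}$, then already $a\in\sum_{l=1}^m a_l\Delta(k)$.} I would prove it by induction on $m$; for $m\le 1$ only the case $a=a_1c_1$ with $a_1\ne 0$ needs attention, where applying $Sq^p$ for $1\le p\le 2^k$ and using the Cartan formula together with $(a_1)Sq^u=0$ for $1\le u\le 2^k$ gives $a_1\cdot\big((c_1)Sq^p\big)=(a)Sq^p=0$, and since $\widetilde{\Gamma}$ is a free associative algebra over a field it has no zero divisors, so this forces $(c_1)Sq^p=0$, i.e.\ $c_1\in\Delta(k)$. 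For $m\ge 2$ we may assume every $a_l\ne 0$ and fix an expression $a=\sum_l a_lc_l$ with $c_l\in\widetilde{\Gamma}$. If every $c_l\in\Delta(k)$ we are done; otherwise $(c_{l_0})Sq^p\ne 0$ for some $l_0$ and some $p\le 2^k$. Applying $Sq^p$ to $a=\sum_l a_lc_l$ and using Cartan and $(a_l)Sq^u=0$ again produces the relation $\sum_l a_l\cdot\big((c_l)Sq^p\big)=0$; discarding the vanishing terms and applying Anick's Condition for the free algebra $\widetilde{\Gamma}$ (Lemma~\ref{lem.anick}) yields an index $j$ with $a_j\in\sum_{l\ne j}a_l\widetilde{\Gamma}$, say $a_j=\sum_{l\ne j}a_le_l$. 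Substituting back, $a=\sum_{l\ne j}a_l(c_l+e_lc_j)$, a dependence among only $m-1$ of the $a_l$, so the induction hypothesis gives $a\in\sum_{l\ne j}a_l\Delta(k)$.

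Granting the claim, Anick's Condition for $\Delta(k)$ is immediate: a relation $\sum_{l=1}^n a_lb_l=0$ in $\Delta(k)$ with every $b_l\ne 0$ is in particular such a relation in the free algebra $\widetilde{\Gamma}$, which satisfies Anick's Condition by Lemma~\ref{lem.anick}, so some $a_j$ lies in $\sum_{l\ne j}a_l\widetilde{\Gamma}$; the claim upgrades this to $a_j\in\sum_{l\ne j}a_l\Delta(k)$. Hence $\Delta(k)$ satisfies Anick's Condition, and Lemma~\ref{lem.anick} identifies it as a tensor algebra over $\mathbb{F}_2$, i.e.\ a free associative $\mathbb{F}_2$-algebra. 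I expect the only genuinely delicate point to be the promotion claim, and within it the key observation that applying a Steenrod square to a dependence $a_j=\sum_{l\ne j}a_lc_l$ among elements of $\Delta(k)$ again yields a relation — the square can act only on the cofactors $c_l$, the $a_l$ being already annihilated — which is what lets one feed Anick's Condition for the ambient free algebra back in to cut down the number of terms. Everything else is bookkeeping with the Cartan formula and the freeness of $\widetilde{\Gamma}$; I do not expect to need a choice of minimal generating set (Lemma~\ref{lem.any}) or any analysis of the Steenrod action on individual basis elements.
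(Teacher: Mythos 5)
Your proof is correct and follows essentially the same route as the paper: read the dependence in the ambient free algebra $\widetilde{\Gamma}$, invoke Anick's Condition there, then apply the squares to the resulting relation and use the Cartan formula (the $a_i$ being already annihilated) to push the action onto the cofactors and shorten the relation. The paper packages the descent as the choice of a relation with a minimal number of summands rather than your explicit induction on $m$ via the ``promotion claim,'' but the content is identical.
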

\begin{proof}
  We will show that $\Delta(k)$ satisfies Anick's Condition.
  Suppose there is a relation in $\Delta(k)$,
  \begin{equation}\label{eqn.Delta_k_sum}
    \sum_{i=1}^n a_ib_i = 0,
  \end{equation}
  where each $b_i \neq 0$.  We want to show that there is an
  index $j$ such that
  \[
    a_j \in \sum_{i \neq j} a_i \Delta(k).
  \]
  This will surely be the case if the elements $a_1, \ldots, a_n$ were
  not distinct, so we may assume that the $a_i$ are distinct.  Now
  Eqn.~(\ref{eqn.Delta_k_sum}) can be read as a relation in the
  connected tensor algebra $\widetilde{\Gamma}$.  The fact that one
  such relation among the elements of $\{a_i\}$ exists means that we
  can find one for which $n$ is minimal.  In
  other words, let
  \begin{equation}\label{eqn.Delta_k_sum2}
    \sum_{i=1}^p a_ic_i = 0,
  \end{equation}
  be a relation with minimal number of summands in $\widetilde{\Gamma}$,
  involving elements from the set $\{a_i\}$, with each $c_i \neq 0$.
  Since $\widetilde{\Gamma}$ satisfies Anick's Condition (by
  Lemma~\ref{lem.anick}), there is an index $j$ such that
  \begin{equation}\label{eqn.Gamma_relation}
    a_j = \sum_{1 \leq i \leq p,\, i\neq j} a_i d_i,
  \end{equation}
  for some $d_i \in \widetilde{\Gamma}$.  We shall show that every
  $d_i$ is in fact a member of $\Delta(k)$.  Let $\ell$ be an integer,
  $0 \leq \ell \leq k$.  Apply $Sq^{2^\ell}$ to both sides of
  Eqn.~(\ref{eqn.Gamma_relation}).  Note, $a_iSq^q = 0$ for each $q$ satisfying $0
 <q \leq 2^{\ell}$ and every $i$, since $a_i \in \Delta(k)$.
  Hence, by the Cartan formula,
  \begin{equation}\label{eqn.Gamma_relation_Sq}
    0 = \sum_{i \neq j} a_i \left( d_i Sq^{2^\ell} \right).
  \end{equation}
  If there are any indices $i$ such that $d_i Sq^{2^\ell} \neq 0$, then
  Eqn.~(\ref{eqn.Gamma_relation_Sq}) would represent a non-trivial
  relation among the elements of $\{a_i\}$, of strictly fewer number
  of terms than the supposed minimal one.  Therefore, $d_i Sq^{2^\ell} =
  0$ for each $i$.  But this is true for any $0 \leq \ell \leq k$, so
  each $d_i \in \Delta(k)$.  This shows that $a_j \in \sum_{i \neq j}
  a_i\Delta(k)$, and so $\Delta(k)$ satisfies Anick's Condition.  Hence $\Delta(k)$
  is a tensor algebra on a positively-graded generating set.
\end{proof}



\section{Analysis of $\Delta(0)$}

In an effort to de-clutter our formulas, we use the notation:
\begin{eqnarray*}
  [i_1, i_2, \ldots, i_s] &=& \gamma_{i_1}\gamma_{i_2}\cdots
  \gamma_{i_s} \in \widetilde{\Gamma}_{s, *},\, s \geq 1\\ 
        {[} \;{]} &=& 1 \in \widetilde{\Gamma}_{0,0}
\end{eqnarray*}

For $s \geq 1$, and integers $m_i \geq 0$, we define special elements of
$\widetilde{\Gamma}$: 
\[
  \sigma(m_1, m_2, \ldots, m_s) \stackrel{def}{=} [2m_1+2, 2m_2+2,
    \ldots, 2m_s+2]Sq^1,
\]
and we let $S_0$ be the set:
$$S_0\,\,=\,\,\{ \sigma(m_1, m_2, \ldots, m_s) \,|\, s\geq 1, m_1\geq 0, \ldots m_s\geq 0\}.$$

Our goal in this section is to prove that $\Delta(0) = \mathrm{ker}Sq^1$ is the free algebra on the set 
$S_0$.

\begin{lemma}\label{imker}
For each $s \geq 1$ one has in $\widetilde{\Gamma}_{s, \ast}$:
\[
  \mathrm{ker} Sq^1 = \mathrm{im}Sq^1.
\]
\end{lemma}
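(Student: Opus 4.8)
The plan is to recognize the assertion as the acyclicity of a chain complex, prove it first for length one, and then bootstrap to arbitrary length by a K\"unneth argument. Since $Sq^1Sq^1 = 0$ (an Adem relation) and $Sq^1$ preserves length, $(\widetilde{\Gamma}_{s,*}, Sq^1)$ is, for each $s \geq 1$, a chain complex whose differential lowers homological degree by one; the asserted equality $\mathrm{ker}\,Sq^1 = \mathrm{im}\,Sq^1$ in $\widetilde{\Gamma}_{s,*}$ is exactly the statement that this complex has vanishing homology, and I would prove this by exhibiting a contracting homotopy.

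For $s = 1$: the space $\widetilde{\Gamma}_{1,*} = \widetilde{H}_*(B(\Z/2))$ is dual to $\widetilde{H}^*(B(\Z/2)) = x\,\mathbb{F}_2[x]$, on which $Sq^1$ is the derivation with $Sq^1x = x^2$; dualizing gives $\gamma_nSq^1 = (n-1)\gamma_{n-1}$, so that $\gamma_{2k}Sq^1 = \gamma_{2k-1}$ while $\gamma_{2k+1}Sq^1 = 0$. Hence $(\widetilde{\Gamma}_{1,*}, Sq^1)$ is the direct sum over $k \geq 1$ of the two-term complexes $\mathbb{F}_2\{\gamma_{2k}\} \xrightarrow{\;\cong\;} \mathbb{F}_2\{\gamma_{2k-1}\}$, and the $\mathbb{F}_2$-linear map $h$ determined by $h(\gamma_{2k-1}) = \gamma_{2k}$ and $h(\gamma_{2k}) = 0$ satisfies $Sq^1 h + h Sq^1 = \mathrm{id}$ on all of $\widetilde{\Gamma}_{1,*}$ (there is no degree-zero correction term, as the reduced homology is concentrated in positive degrees). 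This settles the case $s = 1$.

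For $s \geq 2$ I would bootstrap inductively: by the K\"unneth theorem $\widetilde{\Gamma}_{s,*} \cong \widetilde{\Gamma}_{1,*} \otimes \widetilde{\Gamma}_{s-1,*}$, and the Cartan formula identifies the restricted $Sq^1$ on the right-hand side with the tensor-product differential $Sq^1 \otimes 1 + 1 \otimes Sq^1$ (no signs, since everything is over $\mathbb{F}_2$). Tensoring a contractible complex with an arbitrary complex is again contractible: the map $h \otimes 1$, with $h$ the homotopy from the previous paragraph, satisfies $Sq^1(h\otimes 1) + (h\otimes 1)Sq^1 = \mathrm{id}$, because the two cross terms $h \otimes Sq^1$ cancel modulo $2$ while $(Sq^1h + hSq^1)\otimes 1 = \mathrm{id} \otimes 1$. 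Thus $\widetilde{\Gamma}_{s,*}$ is acyclic under $Sq^1$, i.e. $\mathrm{ker}\,Sq^1 = \mathrm{im}\,Sq^1$, as claimed.

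The argument is soft, so there is no serious obstacle; the one point needing care is verifying that the K\"unneth isomorphism $\widetilde{\Gamma}_{s,*} \cong \widetilde{\Gamma}_{1,*}\otimes\widetilde{\Gamma}_{s-1,*}$ intertwines the restricted map $Sq^1$ with the tensor-product differential — which is precisely the Cartan formula for $Sq^1$ on a smash product — after which the low-degree edge cases (for instance $\gamma_1^{\,s}$ lying in $\mathrm{im}\,Sq^1$) are handled automatically by the explicit homotopy $h \otimes 1$.
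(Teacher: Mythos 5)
Your proposal is correct and follows essentially the same route as the paper: both reduce to the acyclicity of $(\widetilde{\Gamma}_{1,*}, Sq^1)$ via the identification $\widetilde{\Gamma}_{s,*} \cong (\widetilde{\Gamma}_{1,*})^{\otimes s}$ as chain complexes and conclude by a K\"unneth-type argument. You simply make explicit what the paper leaves implicit — the computation $\gamma_n Sq^1 = (n-1)\gamma_{n-1}$ and the contracting homotopy $h \otimes 1$ — and both of these details check out.
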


\begin{proof}
  $Sq^1$ acts as a differential on $\widetilde{\Gamma}_{s, \ast}$; and
  the isomorphism:
  \[
    \widetilde{\Gamma}_{s, \ast} = (\widetilde{\Gamma}_{1,
      \ast})^{\otimes s}
  \]
  is an isomorphism of chain complexes.  Since $\widetilde{\Gamma}_{1,
    \ast}$ is acyclic, our results follows from the K\"{u}nneth theorem.
\end{proof}

Given a monomial $\mu = [i_1, i_2, \ldots, i_s] $ in
$\widetilde{\Gamma}_{s,*}$ we define its weight to be the number of
the indices $i_1,i_2,\ldots i_s$ that are odd.
\begin{lemma}\label{genim}
  Let 
  $\mu\in\widetilde{\Gamma}$ be any monomial.  Then
  $(\mu)Sq^1$ lies in the algebra generated by $S_0$.
\end{lemma}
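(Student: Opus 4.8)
The proof I have in mind is by induction on the degree of $\mu$, and it rests on three facts: $Sq^1$ is a differential and, by the Cartan formula, a derivation of $\widetilde{\Gamma}$; on the generators it acts by $\gamma_iSq^1 = \gamma_{i-1}$ when $i$ is even and $\gamma_iSq^1 = 0$ when $i$ is odd; and the algebra $\langle S_0\rangle$ generated by $S_0$ is contained in $\mathrm{ker}Sq^1$ (each $\sigma(\dots)$ is a $Sq^1$-boundary, hence a cycle, and products of cycles are cycles since $Sq^1$ is a derivation). Two immediate consequences I will use freely are that every odd-indexed generator lies in $S_0$, namely $\gamma_{2m+1} = \sigma(m)$, and that if all indices of $\mu$ are even, say $\mu = [2m_1+2,\dots,2m_s+2]$, then $(\mu)Sq^1 = \sigma(m_1,\dots,m_s)\in S_0$. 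The latter disposes of every monomial of weight $0$ (including $\mu = 1$), which serves as the base of the induction.

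So fix a monomial $\mu = \gamma_a\rho$ of positive weight, where $a = i_1$ and $\rho = [i_2,\dots,i_s]$ is a monomial of strictly smaller degree. If $a$ is odd then $(\mu)Sq^1 = \gamma_a\,(\rho)Sq^1$; since $\gamma_a\in S_0$ and $(\rho)Sq^1\in\langle S_0\rangle$ by the inductive hypothesis, $(\mu)Sq^1\in\langle S_0\rangle$. The substantive case is $a$ even (note then that, $\mu$ having positive weight, some later index is odd, so $s\geq 2$ and $\rho\neq 1$). Here $(\gamma_a)Sq^1 = \gamma_{a-1}$, so $(\mu)Sq^1 = \gamma_{a-1}\rho + \gamma_a\,(\rho)Sq^1$, and the plan is to rewrite the second summand modulo $\langle S_0\rangle$. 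By the inductive hypothesis $(\rho)Sq^1$ is a linear combination of products of elements of $S_0$; factoring the leftmost factor out of each product — all products are nonempty, since $(\rho)Sq^1$ has positive degree whenever it is nonzero — write $(\rho)Sq^1 = \sum_\alpha c_\alpha\,\sigma(\vec{n}_\alpha)\,y_\alpha$ with $y_\alpha\in\langle S_0\rangle$ and $\sigma(\vec{n}_\alpha) = (E_\alpha)Sq^1$ for the all-even monomial $E_\alpha$ determined by $\vec{n}_\alpha$. Put $w = \sum_\alpha c_\alpha\,E_\alpha\,y_\alpha$. Since the $y_\alpha$ are cycles, the derivation property gives $(w)Sq^1 = \sum_\alpha c_\alpha\,(E_\alpha)Sq^1\,y_\alpha = (\rho)Sq^1$; and since $a$ is even, the derivation property also gives $\gamma_a(E_\alpha)Sq^1 = (\gamma_aE_\alpha)Sq^1 + \gamma_{a-1}E_\alpha$, whence $\gamma_a\,(\rho)Sq^1 + \gamma_{a-1}w = \sum_\alpha c_\alpha\,(\gamma_aE_\alpha)Sq^1\,y_\alpha$. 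As each $\gamma_aE_\alpha$ is an all-even monomial, $(\gamma_aE_\alpha)Sq^1\in S_0$, so this last expression lies in $\langle S_0\rangle$.

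Assembling the pieces, $(\mu)Sq^1 \equiv \gamma_{a-1}(\rho+w)\pmod{\langle S_0\rangle}$, and $\rho+w$ is a cycle because $(\rho)Sq^1 = (w)Sq^1$. This is where Lemma~\ref{imker} is essential: $\rho+w$ lies in $\widetilde{\Gamma}_{s-1,\ast}$ with $s-1\geq 1$, so $\mathrm{ker}Sq^1 = \mathrm{im}Sq^1$ there and we may write $\rho+w = \sum_\lambda c_\lambda\,(\lambda)Sq^1$ with each $\lambda$ a monomial of degree $\deg\rho + 1 = \deg\mu - a + 1 < \deg\mu$. The inductive hypothesis applies to each $(\lambda)Sq^1$, and since $\gamma_{a-1}\in S_0$ and $\langle S_0\rangle$ is an algebra, $\gamma_{a-1}(\rho+w) = \sum_\lambda c_\lambda\,\gamma_{a-1}(\lambda)Sq^1\in\langle S_0\rangle$. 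Hence $(\mu)Sq^1\in\langle S_0\rangle$, completing the induction.

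I expect the only real care to be in the bookkeeping: matching lengths so that Lemma~\ref{imker} is invoked in the correct bidegree, checking that the degenerate situations (such as $(\rho)Sq^1 = 0$, which just makes $w = 0$) are genuinely subsumed by the argument above, and — the point that makes the induction go through — confirming that every term fed back into the inductive hypothesis has degree strictly below that of $\mu$, which is exactly where $a\geq 2$ is used. No single step is deep; the substance is the choice of the antiderivative $w$ and the characteristic-two cancellation $\gamma_{a-1}w + \gamma_{a-1}w = 0$ that converts $\gamma_a\,(\rho)Sq^1$ into an element visibly in $\langle S_0\rangle$ plus $\gamma_{a-1}w$.
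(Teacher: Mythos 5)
Your argument is correct, but it takes a genuinely different route from the paper's, so it is worth setting the two side by side. The paper inducts on the \emph{weight} of $\mu$ (the number of odd indices), not on its degree: it isolates one odd factor $[i_k]=[2m]Sq^1$, writes $\mu=\alpha\cdot[i_k]\cdot\beta$, and---by adding the cross term $(\alpha)Sq^1\cdot[2m]\cdot(\beta)Sq^1$ twice, which costs nothing in characteristic two---arrives at the closed identity $(\mu)Sq^1=(\alpha)Sq^1\cdot([2m]\beta)Sq^1+(\alpha[2m])Sq^1\cdot(\beta)Sq^1$, in which every factor has the form $(\gamma)Sq^1$ for a monomial $\gamma$ of weight strictly less than that of $\mu$; the induction closes at once, with no appeal to Lemma~\ref{imker}. (Induction on degree would not work with that decomposition, since $[2m]\beta$ may have degree exceeding that of $\mu$; conversely, your leftmost-factor peeling does not lower the weight, which is why each proof is wedded to its own induction variable.) Your proof instead inducts on degree and, in the even case, builds an explicit antiderivative $w$ of $(\rho)Sq^1$ so that $\rho+w$ is a cycle, then invokes Lemma~\ref{imker} to rewrite it as a boundary of monomials of degree $\deg\mu-a+1<\deg\mu$; the crucial inequality holds because $a\geq 2$, as you note, so there is no gap. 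The trade-off: the paper's identity is shorter and keeps Lemma~\ref{genim} logically independent of Lemma~\ref{imker} (the two are only combined afterward, in Lemma~\ref{genker}), whereas your version imports the exactness of $Sq^1$ into the induction and pays for it with the bookkeeping around $w$; in exchange it avoids the somewhat unmotivated double-counting trick and makes visible exactly where each generator $\sigma(\vec n)=(E)Sq^1$ enters.
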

\begin{proof}
We will prove the lemma by induction on $t$, the weight of $\mu$.  The
case $t = 0$ is tautological.  Now suppose that $t\geq 1$ and that the
lemma has been proved for all monomials $\mu$ of weights less than
$t$.  Let $\mu = [i_1, i_2, \ldots, i_s]$ be a given monomial of
weight $t$.  Choose an index $i_k$ that is odd; say, $i_k = 2m - 1$.
Then $[i_k] = [2m]Sq^1$ and $[i_k]Sq^1 = 0$. Let $\alpha = [i_1,
  \ldots, i_{k-1}]$ and $\beta = [i_{k+1}, \ldots, i_s]$ so that using
the product in $\widetilde{\Gamma}$ we may write: $\mu =
\alpha\cdot[i_k]\cdot\beta$.  Then,
\begin{eqnarray*}
  (\mu)Sq^1 &=& (\alpha\cdot [i_k]\cdot\beta)Sq^1\\
  &=& (\alpha)Sq^1 \cdot[i_k]\cdot\beta + 0 + 
  \alpha\cdot [i_k]\cdot (\beta)Sq^1\\
  &=& (\alpha)Sq^1\cdot [i_k]\cdot\beta + 
  (\alpha)Sq^1 \cdot[2m]\cdot(\beta)Sq^1 + 
  (\alpha)Sq^1\cdot [2m]\cdot (\beta)Sq^1 + 
  \alpha\cdot[i_k]\cdot(\beta) Sq^1\\
  &=& (\alpha)Sq^1 \cdot([2m]\cdot\beta)Sq^1 + 
    (\alpha\cdot [2m])Sq^1\cdot (\beta)Sq^1.\\
\end{eqnarray*}
But the right hand side of this equation is a sum of products of
elements of the form $(\gamma)Sq^1$, where in each case, $\gamma$ is a
monomial of weight less than $t$.  So the inductive hypothesis implies
that $(\mu)Sq^1$ lies in the algebra generated by $S_0$, and our
inductive proof is complete.
\end{proof}

Combining Lemmas \ref{imker} and \ref{genim}, we find:
\begin{lemma}\label{genker}
  $\Delta(0)$ is generated as an algebra by the set $S_0$.
\end{lemma}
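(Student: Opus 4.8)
The plan is to read Lemma~\ref{genker} as the set-theoretic equality between $\Delta(0)$ and the subalgebra of $\widetilde{\Gamma}$ generated by $S_0$, and to prove it by establishing the two inclusions separately. Both are short, because Lemmas~\ref{imker} and \ref{genim} between them carry essentially all of the weight; what remains is bookkeeping, plus the verification that the algebra we are generating actually sits inside $\Delta(0)$.

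For the inclusion $\langle S_0\rangle \subseteq \Delta(0)$, I would first observe that $S_0 \subseteq \Delta(0)$: since $\Delta(0) = \mathrm{ker}\,Sq^1$ and $Sq^1\circ Sq^1 = 0$, each generator $\sigma(m_1,\dots,m_s) = [2m_1+2,\dots,2m_s+2]Sq^1$ satisfies $\bigl(\sigma(m_1,\dots,m_s)\bigr)Sq^1 = 0$, so it lies in $\mathrm{ker}\,Sq^1$. Next, the Cartan formula shows that $\mathrm{ker}\,Sq^1$ is closed under the product of $\widetilde{\Gamma}$: if $aSq^1 = bSq^1 = 0$ then $(ab)Sq^1 = (aSq^1)\,b + a\,(bSq^1) = 0$. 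Hence $\Delta(0)$ is a subalgebra of $\widetilde{\Gamma}$ containing $S_0$, so it contains the whole subalgebra generated by $S_0$.

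For the reverse inclusion, I would take an arbitrary homogeneous $x \in \Delta(0)_{s,d}$ and split on the length $s$. If $s = 0$ then $x$ lies in $\widetilde{\Gamma}_{0,0}\cong\mathbb{F}_2$, so $x$ is a scalar and lies trivially in the subalgebra generated by $S_0$ (which contains the unit). If $s \geq 1$, then $x \in \mathrm{ker}\,Sq^1$, and Lemma~\ref{imker} gives $x \in \mathrm{im}\,Sq^1$; write $x = (y)Sq^1$ with $y \in \widetilde{\Gamma}_{s,d+1}$, expand $y$ as a sum of monomials $y = \sum_i \mu_i$, and use additivity of $Sq^1$ to get $x = \sum_i (\mu_i)Sq^1$. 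By Lemma~\ref{genim} each $(\mu_i)Sq^1$ lies in the subalgebra generated by $S_0$, hence so does $x$. Combining the two inclusions yields the lemma.

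I do not expect a genuine obstacle here, precisely because the two preceding lemmas were engineered to make this step routine; the only points deserving a moment's care are that $\langle S_0\rangle$ really is a subalgebra \emph{of $\Delta(0)$} (this is the role of $Sq^1\circ Sq^1 = 0$ together with the Cartan formula) and that the length-zero graded piece of $\Delta(0)$ is accounted for by the algebra unit rather than by any element of $S_0$. If one prefers, the argument for the nontrivial inclusion can be packaged as the single line $\Delta(0)_{s,*} = \mathrm{im}\,Sq^1 \subseteq \langle S_0\rangle$ for $s \geq 1$, obtained by applying Lemma~\ref{genim} monomial-by-monomial to a $Sq^1$-preimage supplied by Lemma~\ref{imker}.
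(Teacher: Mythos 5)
Your proof is correct and is exactly the argument the paper intends: the paper gives no explicit proof, simply stating that the lemma follows by ``combining Lemmas~\ref{imker} and~\ref{genim},'' and your two inclusions spell out precisely that combination (with the easy extra checks that $S_0\subseteq\mathrm{ker}\,Sq^1$ via $Sq^1\circ Sq^1=0$ and that $\mathrm{ker}\,Sq^1$ is a subalgebra via the Cartan formula).
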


The next lemma will be useful in proving that the set $S_0$ is algebraically independent.
In what follows, when we write
``$\alpha$ expanded in terms of $\widetilde{\Gamma}$'', we mean to
express $\alpha$ as a sum of monomials $[i_1, i_2, \ldots, i_s] \in
\widetilde{\Gamma}_{s,*}$.  By abuse of notation, we say that $i_j$ is
a factor of the term $[i_1, i_2, \ldots, i_s]$, and so we may speak of
odd or even factors of such a term.

\begin{lemma}\label{lem.linind}
  $S_0$ is a linearly independent subset of $\widetilde{\Gamma}.$
\end{lemma}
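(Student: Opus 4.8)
The plan is to show that a nontrivial linear dependence among the $\sigma(m_1,\ldots,m_s)$ is impossible by examining leading terms under a suitable ordering of monomials in $\widetilde{\Gamma}$. First I would fix an ordering on monomials: for a fixed length $s$ and degree $d$, order the monomials $[i_1,\ldots,i_s]$ lexicographically (say, with larger first index being ``bigger''). Given a generator $\sigma(m_1,\ldots,m_s) = [2m_1+2,\ldots,2m_s+2]Sq^1$, I would expand it in terms of $\widetilde{\Gamma}$ using the Cartan formula: since $[2m]Sq^1 = [2m+1]$ (the only nonzero $Sq^1$ on a single generator $\gamma_{2m}$ of even index being $\gamma_{2m+1}$, as $Sq^1\gamma_j = \binom{j}{1}\gamma_{j+1} \cdot$... — more precisely $Sq^1\gamma_j = j\,\gamma_{j-1}$ in homology, so I need to get the exact formula straight, but in any case $[2m]Sq^1$ is a single monomial of the same length), the expansion of $\sigma(m_1,\ldots,m_s)$ is a sum of exactly $s$ monomials, each obtained by incrementing or decrementing one coordinate of $[2m_1+2,\ldots,2m_s+2]$ by one; crucially each such monomial has exactly one odd factor, i.e.\ weight $1$.

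The key observation to exploit is that each $\sigma(m_1,\ldots,m_s)$ has a well-defined ``leading term'' in this ordering, and that the map $(m_1,\ldots,m_s) \mapsto (\text{leading term of }\sigma(m_1,\ldots,m_s))$ is injective across all choices of $s$ and all tuples. Concretely, I expect the leading monomial to be the one where the odd factor sits in the last position — something like $[2m_1+2,\ldots,2m_{s-1}+2, 2m_s+1]$ or $[2m_1+2,\ldots, 2m_s+3]$ depending on which term survives and how the ordering is set up — and from this leading monomial one can read off $s$ and recover $(m_1,\ldots,m_s)$ uniquely. Then, given a hypothetical finite linear relation $\sum_\lambda c_\lambda \sigma(\mathbf{m}^{(\lambda)}) = 0$ with $c_\lambda \neq 0$, all terms must lie in a common bidegree $(s,d)$, so all the tuples have the same length $s$; picking out the largest leading monomial among them (which, by injectivity, is not cancelled by any other term's leading monomial nor by any lower-order term), we get a contradiction.

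I would organize this as: (1) write down the explicit monomial expansion of $\sigma(m_1,\ldots,m_s)$, noting all $s$ terms have weight $1$; (2) define the monomial order and identify the leading term precisely; (3) prove the recovery/injectivity statement — from the leading term alone, reconstruct $s$ and the tuple; (4) conclude via the standard leading-term argument that no nontrivial relation exists. The main obstacle I anticipate is step (2)--(3): getting the combinatorics of the $Sq^1$ expansion exactly right (in particular the correct formula $Sq^1\gamma_j$, and which term is maximal after the possible cancellation when an incremented index coincides with a decremented one from an adjacent slot is not an issue here since the terms are genuinely distinct monomials of a \emph{single} word, not a product), and verifying that distinct tuples $(m_1,\ldots,m_s)$ — possibly of different lengths, though the relation forces equal length — never produce the same leading monomial. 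Once the leading-term bookkeeping is pinned down, the linear independence follows formally, and there is no deep input needed beyond the freeness of $\widetilde{\Gamma}$ as an associative algebra (which gives us a genuine monomial basis to order).
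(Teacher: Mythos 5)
Your proposal is correct and is essentially the paper's argument: both expand each $\sigma(m_1,\ldots,m_s)$ into its $s$ weight-one monomials and single out one monomial from which the tuple can be recovered uniquely. The paper does this without any monomial ordering or leading-term machinery, by simply observing that the monomial $[2m_1+1, 2m_2+2,\ldots,2m_s+2]$ (odd entry in the first slot) occurs in the expansion of $\sigma(m_1,\ldots,m_s)$ and in no other generator's expansion; and, to settle the detail you flagged, $(\gamma_{2m+2})Sq^1 = (2m+1)\,\gamma_{2m+1} = \gamma_{2m+1}$, so the affected index is decremented and all $s$ terms survive with coefficient $1$.
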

\begin{proof}
  Since $\widetilde{\Gamma}$ is graded on length, 
  any potential linear dependence will be of the form
  \begin{equation}\label{eqn.lindep}
    \sum_{j} \sigma(m_{j,1}, m_{j,2}, \ldots, m_{j,s}) = 0.
  \end{equation}
  where the sum is over a positive number of indices $j$ and the value
  of $s$ is the same for all terms.  We wish to show that such a
  relation is impossible if the sequences $\{m_{j,1}, m_{j,2}, \ldots,
  m_{j,s}\}$ are all distinct.  But when a term $ \sigma(m_{j,1},
  m_{j,2}, \ldots, m_{j,s})$ is expanded in terms of
  $\widetilde{\Gamma}$, the monomial $[2m_{j,1} + 1, 2m_{j,2}+
    2,\ldots, 2m_{j,s}+2]$ will appear, and this monomial cannot
  appear in any of the other expansions.  So a relation of the form
  \eqref{eqn.lindep} is impossible.
  \end{proof}
  
  Preparing the proof of the next result, we define
  $W_k\widetilde{\Gamma}$ to be the subspace of $\widetilde{\Gamma}$
  that is spanned by all the monomials $[i_1, i_2, \ldots, i_s] $ of
  weight $k$.  Then as a bigraded vector space, $\widetilde{\Gamma}$
  splits as a direct sum:
\begin{equation}\label{directsum}
\widetilde{\Gamma} = \bigoplus_{k\geq  0} W_k\widetilde{\Gamma}.
\end{equation}
The product on $\widetilde{\Gamma}$ is compatible with this direct sum
decomposition, in the sense that:
\begin{equation}\label{product}
 W_k\widetilde{\Gamma}\,\cdot W_{\ell}\widetilde{\Gamma}\subseteq
 W_{k+\ell}\widetilde{\Gamma}.
\end{equation}

\begin{prop}\label{prop.tens_Delta0}
  $\Delta(0) = \mathbb{F}_2\langle S_0\rangle$.
\end{prop}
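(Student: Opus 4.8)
The plan is to combine the two halves already assembled in this section. By Lemma~\ref{genker} we know that $S_0$ generates $\Delta(0)$ as an algebra, and by Theorem~\ref{thm.free} we know that $\Delta(0)$ is a \emph{free} associative $\mathbb{F}_2$-algebra on \emph{some} positively-graded generating set. The natural strategy is therefore: first show that $S_0$ is in fact a \emph{minimal} generating set for $\Delta(0)$, and then invoke Lemma~\ref{lem.any} (with $\Bbbk = \mathbb{F}_2$, $A = \Delta(0)$, and $S = S_0$), which immediately yields $\Delta(0) = \mathbb{F}_2\langle S_0\rangle$. Note that $\Delta(0)$ does carry a suitable grading: it is bigraded on length and degree, and we may further refine the length-grading, or simply treat $(s,d)$ as a $\mathbb{Z}^2$-grading, so that the connectedness and $\mathbb{Z}^t$-graded hypotheses of Lemma~\ref{lem.any} apply.

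To prove minimality of $S_0$, the idea is to use the weight filtration recorded in \eqref{directsum} and \eqref{product}. A generating set fails to be minimal precisely when some generator lies in the subalgebra generated by the others; combined with linear independence (Lemma~\ref{lem.linind}), what must be ruled out is that some $\sigma(m_1,\ldots,m_s)$ is expressible as a polynomial in the \emph{other} elements of $S_0$. The key observation is that each $\sigma(m_1,\ldots,m_s)$, when expanded in terms of $\widetilde{\Gamma}$, lies in $W_1\widetilde{\Gamma}$: applying $Sq^1$ to the weight-$0$ monomial $[2m_1+2,\ldots,2m_s+2]$ produces a sum of monomials each having exactly one odd factor (the computation $\bigl([2m_1+2,\ldots]\bigr)Sq^1 = \sum_r [2m_1+2,\ldots,2m_r+1,\ldots,2m_s+2]$, using that $[2m]Sq^1=[2m-1]$ and $[2m-1]Sq^1=0$ together with the Cartan formula). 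Hence every element of $S_0$ lies in $W_1\widetilde{\Gamma}$. By \eqref{product}, a product of $\ell$ elements of $\Delta(0)^+$ lands in $W_{\geq \ell}\widetilde{\Gamma}$; so the only way an element of $W_1\widetilde{\Gamma}\cap\Delta(0)$ can be built from $S_0$ is as an $\mathbb{F}_2$-linear combination of single elements of $S_0$ — a product of two or more generators would have weight $\geq 2$ and could not contribute to the weight-$1$ part. Therefore the weight-$1$ part of $\Delta(0)$ is exactly the $\mathbb{F}_2$-span of $S_0$, and no $\sigma(m_1,\ldots,m_s)$ can be a polynomial in the others (such an expression would, after projecting to $W_1\widetilde{\Gamma}$, become a linear dependence among distinct elements of $S_0$, contradicting Lemma~\ref{lem.linind}). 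This establishes that $S_0$ is a minimal generating set.

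The main obstacle is the minimality argument, specifically being careful about the weight bookkeeping: one must check that every element of $S_0$ truly lands in $W_1\widetilde{\Gamma}$ (not merely in $W_{\leq 1}$ — the weight-$0$ part must vanish, which it does since $[2m_1+2,\ldots,2m_s+2]Sq^1$ has no weight-$0$ terms, each summand flipping exactly one even index to odd), and that the filtration estimate \eqref{product} is applied in the correct direction, namely that products of positively-graded elements of $\Delta(0)$ strictly raise weight. Once those points are nailed down, the rest is a direct appeal to Lemma~\ref{genker}, Lemma~\ref{lem.linind}, and Lemma~\ref{lem.any}, and the proof is complete.
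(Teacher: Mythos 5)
Your proposal is correct and follows essentially the same route as the paper: generation via Lemma~\ref{genker}, then minimality of $S_0$ via the weight decomposition \eqref{directsum}--\eqref{product} (each $\sigma$ lies in $W_1\widetilde{\Gamma}$, so a polynomial expression collapses to a linear one, contradicting Lemma~\ref{lem.linind}), and finally Lemma~\ref{lem.any}. Your added justification that each $\sigma(m_1,\ldots,m_s)$ lands in $W_1\widetilde{\Gamma}$, and the explicit appeal to Theorem~\ref{thm.free} to supply the hypothesis of Lemma~\ref{lem.any}, are details the paper leaves implicit.
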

\begin{proof}
  We have already shown that the set of all products of elements from
  $S_0$ generates $\mathrm{ker}Sq^1$ as a vector space.  All that
  remains is to prove that $S_0$ is an algebraically independent set.
  According to Lemma \ref{lem.any}, it will be enough to prove that
  $S_0$ is a {\it minimal} generating set for the algebra
  $\mathrm{ker}Sq^1$.  Suppose the contrary.  Then it would be
  possible to express one particular generator, say, $ \sigma(m_1,
  m_2, \ldots, m_s)$, as a (non-commutative) polynomial in the other
  generators.  But each generator $ \sigma(n_1, n_2, \ldots, n_l)$
  lies in $W_1\widetilde{\Gamma}$, so equations \eqref{directsum} and
  \eqref{product} imply that our expression of $ \sigma(m_1, m_2,
  \ldots, m_s)$ in terms of the other generators would reduce to an
  expression of $ \sigma(m_1, m_2, \ldots, m_s)$ as a linear
  combination of the others.  According to Lemma \ref{lem.linind},
  this is not possible.  So $S_0$ must be a minimal generating set for
  the algebra $\mathrm{ker}Sq^1$, and our proof is complete.
   \end{proof}

Using Prop.~\ref{prop.tens_Delta0}, we can determine a formula
for the dimension of the homogeneous components of $\Delta(0)$.
\begin{prop}\label{prop.dim_Delta0}
  Let $c_{s,d}$ be the dimension of the component of $\Delta(0)$ in
  bidgree $(s, d)$.  Let $\eta_{s,d}$ be defined by
  \[
    \eta_{s,d} = \left\{\begin{array}{ll}
      0, & \textrm{if $d$ is even} \\
      \binom{\frac{d-1}{2}}{s-1}, & \textrm{if $d$ is odd}
        \end{array}\right.
  \]
  There is a recurrence relation,
  \begin{eqnarray*}
    c_{0,0} &=& 1 \\
    c_{s,0} &=& 0, \quad\textrm{if $s > 0$} \\
    c_{0,d} &=& 0, \quad\textrm{if $d > 0$} \\
    c_{s,d} &=& \sum_{r=1}^s \sum_{a=1}^d \eta_{r, a}c_{s-r, d-a}, \quad
      \textrm{if $s, d \geq 1$}\\
  \end{eqnarray*}
\end{prop}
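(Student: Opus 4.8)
The plan is to obtain the recurrence directly from Proposition~\ref{prop.tens_Delta0}, which identifies $\Delta(0)$ as the free associative algebra on $S_0$, combined with a count of how many elements of $S_0$ occupy each bidegree. Once one knows that $\Delta(0)$ is a tensor algebra and knows the bigraded dimension of its space of generators, the formula for $c_{s,d}$ is forced by the usual Hilbert-series identity for a free algebra.

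First I would pin down the bidegree of a generator. By definition $\sigma(m_1,\ldots,m_r) = [2m_1+2,\ldots,2m_r+2]Sq^1$, and $[2m_1+2,\ldots,2m_r+2] = \gamma_{2m_1+2}\cdots\gamma_{2m_r+2}$ lies in $\widetilde{\Gamma}_{r,\,2(m_1+\cdots+m_r)+2r}$; since $Sq^1$ lowers degree by one, $\sigma(m_1,\ldots,m_r)$ has length $r$ and degree $a := 2(m_1+\cdots+m_r)+2r-1$, which is odd. Conversely, for fixed $r \ge 1$ and odd $a$, the generators of bidegree $(r,a)$ correspond bijectively to $r$-tuples of non-negative integers with $m_1+\cdots+m_r = \tfrac{a+1}{2}-r$; by Lemma~\ref{lem.linind} distinct tuples yield distinct (indeed linearly independent) elements, so the number of such generators equals the number of such tuples, namely $\binom{(a-1)/2}{r-1}$ by a stars-and-bars count. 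This binomial coefficient is $0$ precisely when $(a-1)/2 < r-1$, i.e.\ when $\tfrac{a+1}{2}-r < 0$ and there are no tuples at all, and there are of course no generators of even degree; hence in every bidegree $(r,a)$ the number of elements of $S_0$ is exactly $\eta_{r,a}$.

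Next I would invoke the Hilbert-series identity. Since $S_0$ consists of homogeneous elements, Proposition~\ref{prop.tens_Delta0} identifies $\Delta(0)$, as a bigraded algebra, with the tensor algebra $T(V)$ on the bigraded vector space $V$ having $\dim_{\mathbb{F}_2} V_{r,a} = \eta_{r,a}$; note that $V$ is concentrated in bidegrees $(r,a)$ with $r \ge 1$ and $a \ge 1$. Splitting off the leftmost tensor factor gives an isomorphism of bigraded vector spaces $T(V) = \mathbb{F}_2 \oplus \bigl(V \otimes T(V)\bigr)$, so comparing dimensions in bidegree $(s,d)$ yields $c_{0,0} = 1$ and, for $(s,d) \ne (0,0)$,
\[
  c_{s,d} \;=\; \sum_{r,a} (\dim V_{r,a})\, c_{s-r,\,d-a} \;=\; \sum_{r,a} \eta_{r,a}\, c_{s-r,\,d-a},
\]
the sum being finite because $\Delta(0)$ is non-negatively bigraded. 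For $s>0,\ d=0$ and for $s=0,\ d>0$ every term vanishes (a nonzero term would require a generator of degree $0$, respectively of length $0$), so $c_{s,0}=0$ and $c_{0,d}=0$ there; and for $s,d \ge 1$ the only potentially nonzero terms have $1 \le r \le s$ and $1 \le a \le d$, since $\eta_{r,a} = 0$ unless $r,a \ge 1$ and $c_{s-r,d-a} = 0$ unless $r \le s$ and $a \le d$. This is precisely the stated recurrence.

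The argument is essentially routine, and the one place calling for genuine care is the generator count in the second step: correctly tracking the bidegree of $\sigma(m_1,\ldots,m_r)$ and verifying that the stars-and-bars value $\binom{(a-1)/2}{r-1}$ agrees with $\eta_{r,a}$ in the degenerate ranges (even $a$, or $a < 2r-1$) where both vanish. One should also take care to note explicitly that the convolution sum over all bidegrees is finite, as this is what licenses its truncation to $1 \le r \le s$ and $1 \le a \le d$.
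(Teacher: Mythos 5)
Your proposal is correct and follows essentially the same route as the paper: both arguments rest on Proposition~\ref{prop.tens_Delta0}, count the generators in bidegree $(r,a)$ by stars-and-bars as $\binom{(a-1)/2}{r-1}=\eta_{r,a}$, and obtain the recurrence by splitting off the leftmost generator factor (your $T(V)=\mathbb{F}_2\oplus(V\otimes T(V))$ is just a more formal phrasing of the paper's partition of monomials by their first factor). Your explicit attention to finiteness of the convolution sum and to the degenerate ranges where $\eta_{r,a}$ vanishes is a slight tidying of details the paper leaves implicit, but the substance is identical.
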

\begin{proof}
  Since $\Delta(0)$ is a tensor algebra, $c_{s,d}$ counts the number of
  terms of length $s$ and degree $d$.  Partition the terms of
  $\Delta(0)_{s,d}$ by length and degree of the first factor in the
  term.  In what follows, ``$\sigma$'' always represents an
  algebra generator, such as $\sigma(m_1, m_2, \ldots, m_k)$, while
  ``$\tau$'' represents a (possibly empty) product of algebra generators.
  \[
    \Delta(0)_{s,d} = \mathrm{span}\left( \coprod_{r, a} \left\{
      \sigma \cdot \tau \;|\; \ell(\sigma) = r, deg(\sigma)= a,
      \ell(\tau) = s-r, deg(\tau) = d-a  \right\}\right)
  \]
  Now since $\tau$ is an arbitrary term in $\Delta(0)_{s-r, d-a}$,
  we obtain the formula,
  \[
     c_{s,d} = \sum_{r, a} (\textit{number of algebra generators
       $\sigma$ in bidegree $(r, a)$})\cdot c_{s-r, d-a}       
  \]
  For a typical $\sigma$ in bidegree $(r, a)$, we have $\sigma =
  \sigma(m_1, \ldots, m_r)$ such that $2(m_1 + \cdots + m_r) + 2r - 1
  = a$.  Thus $m_1 + \cdots + m_r = (a+1)/2 - r$.  So the number of
  algebra generators $\sigma$ in bidegree $(r, a)$ is found by
  counting the number of ordered partitions of $(a+1)/2 - r$ into $r$
  parts.  Elementary combinatorics tells us that the number of such
  partitions is exactly $\eta_{r, a}$ as defined above.

  The base cases for the recurrence are easily verified.
\end{proof}

As examples, we find closed formulas for $c_{s,d}$ for small $s$:
\begin{itemize}
  \item $\ds{c_{1,d} = \left\{\begin{array}{ll} 0, & \textrm{$d$ even}\\
          1, & \textrm{$d$ odd}\end{array}\right.}$
  \item $\ds{c_{2,d} = \left\{\begin{array}{ll} \frac{d}{2}, & 
          \textrm{$d$ even}\\ \frac{d-1}{2}, & \textrm{$d$ odd}
        \end{array}\right.}$
  \item $\ds{c_{3,d} = \left\{\begin{array}{ll}
          \frac{d(d-2)}{4}, & \textrm{$d$ even}\\
          \frac{(d-1)^2}{4}, & \textrm{$d$ odd}
        \end{array}\right.}$
\end{itemize}

It was pointed out by the referee that the exactness of $Sq^1$ on
$\widetilde{\Gamma}_{s,*}$ implies a nice reduction formula:
\begin{equation}
  c_{s,d} + c_{s,d+1} = dim\left( \widetilde{\Gamma}_{s,d+1} \right)
  = \binom{d}{s-1}.
\end{equation}


\bibliographystyle{plain}


\end{document}